\newtheorem{Thm}{Theorem}
\newtheorem{Prop}[Thm]{Proposition}
\theoremstyle{definition}
\newtheorem{Def}[Thm]{Definition}
\theoremstyle{remark}
\numberwithin{equation}{section}
\def\CC{{\mathbb{C}}}
\def\RR{{\mathbb{R}}}
\def\ZZ{{\mathbb{Z}}}
\def\NN{{\mathbb{N}}}
\title[Darboux theory of integrability in the sparse case]{Darboux theory of integrability in the sparse case}
\date{\today}
\author[G.~Ch\`eze]{Guillaume Ch\`eze}
\address{Institut de Math\'ematiques de Toulouse\\
Universit\'e Paul Sabatier Toulouse 3 \\
MIP B\^at 1R3\\
31 062 TOULOUSE cedex 9, France}
\email{guillaume.cheze@math.univ-toulouse.fr}
\begin{document}
	
\begin{abstract}
Darboux's theorem and Jouanolou's theorem deal with the existence of first integrals and rational first integrals of a polynomial vector field. These results are given in terms of the degree of the polynomial vector field. Here we show that we can get the same kind of results if we consider the size of a Newton polytope associated to the vector field. Furthermore, we show that in this context the bound is optimal.
\end{abstract}	
	
\maketitle

\section*{Introduction}
In this paper we study the following  polynomial differential system in $\CC^n$:
$$\dfrac{dX_1}{dt}=A_1(X_1,\ldots,X_n), \, \ldots, \, \dfrac{dX_n}{dt}=A_n(X_1,\ldots,X_n),$$
where $A_i \in \CC[X_1,\ldots,X_n]$ and $\deg A_i \leq d$.  We associate  to this polynomial differential system the polynomial derivation $D=\sum_{i=1}^nA_i(X_1,\ldots,X_n)\partial_{X_i}$. \\

The computation of  first integrals of such  polynomial differential systems is an old and classical problem. The situation is the following: we want to compute a function $\mathcal{F}$ such that the hypersurfaces $\mathcal{F}(X_1,\ldots,X_n)=c$, where $c$ are constants, give  orbits of the differential system. Thus we want to find  a function $\mathcal{F}$ such that $D(\mathcal{F})=0$.\\

In 1878, G.~Darboux \cite{Darboux} has given a strategy to find first integrals. One of the tools developed by G.~Darboux is now called \emph{Darboux polynomials}. \\
A polynomial $f$ is said to be a \emph{Darboux polynomial},  if $D(f)=g.f$, where $g$ is a polynomial. The polynomial $g$ is called the cofactor. A lot of properties of a polynomial differential system are related to Darboux polynomials of the corresponding derivation $D$, see e.g.\cite{Goriely,Dumortier_Llibre_Artes}.\\
There exists a lot of different names in the literature for Darboux polynomials, for example we can find: special integrals, eigenpolynomials, algebraic invariant hypersurfaces, or special polynomials. \\

 G.~Darboux shows in \cite{Darboux2} that \emph{if the derivation  $D$ has at least $\binom{n+d-1}{n}+1$ Darboux polynomials then $D$ has a  first integral which can be expressed by means of these polynomials}. More precisely the first integral has the following form: $\prod_i f_i^{\lambda_i}$ where $f_i$ are Darboux polynomials and $\lambda_i$ are complex numbers. This kind of integral is called today a Darboux first integral.

In 1979,   J.-P.~Jouanolou shows in his book \cite{Joua_Pfaff}, that \emph{if a derivation has at least $\binom{n+d-1}{n}+n$ Darboux polynomials then the derivation has a rational first integral}. We recall that a rational first integral is a first integral which belongs to $\CC(X_1,\ldots,X_n)$. \\
Several authors have given simplified proof for this result. M.~Singer proves this result in $\CC^2$, see \cite{Singer}. This approach is based on a work of Rosenlicht \cite{Rosenlicht}. J.-A. Weil generalizes this strategy and gives a proof for derivations in $\CC^n$, see \cite{Weil}. J. Llibre and X. Zhang gives a direct proof  of Jouanolou's result in \cite{LZ}. \\

Darboux and Jouanolou theorem are improved in \cite{LZ1,LZ2}. The authors show that we get the same kind of result if we take into account the multiplicity of Darboux polynomials. The multiplicity of a Darboux polynomial is defined and studied in \cite{Christopher_Llibre_Pereira}.\\

The Darboux theory of integrability has been successfully used in the study of some physical problems, see e.g. \cite{Valls,LlibreValls}, and in the study of  limit cycles and centers, see e.g. \cite{Christopher1994,Schlomiuk,Llibre-rodriguez}. Unfortunately, to the author knowledge, there do not exist example showing if these bounds are optimal. In this note we are going to study the situation in the sparse case. This means that we are going to consider  polynomials $A_i$ with some coefficients equals to zero. In this situation, the size of the polynomials $A_i$ is not measured by the degree but by the size of its Newton polytope. We recall that the Newton polytope of a Laurent polynomial $f(\underline{X})=\sum_{\alpha} c_{\alpha}X^{\alpha}$, where $\underline{X}=X_1,\dots,X_n$ and $\alpha$ is a multi-index $(\alpha_1,\ldots,\alpha_n) \in \ZZ^n$, is the convex hull in $\RR^n$ of the exponent $\alpha$ of all  nonzero terms of $f$. We denote this polytope by $\mathcal{N}(f)$.\\

In this note we prove a result improving Darboux and Jouanolou theorem. Our result depends on the size of a Newton polytope associated to the derivation and not on the degree $d$. Furthermore, in this context we can give example showing that the bound is optimal. This is our result:
\begin{Thm}\label{Thm}
Let $D=\sum_{i=1}^nA_i(X_1,\ldots,X_n)\partial_{X_i}$ be a derivation.
Consider generic values $(x_1,\ldots,x_n)$ in $\CC^n$ and the polytope $N_D=\mathcal{N}\Big(\sum_{i=1}^n x_i \dfrac{A_i}{X_i}\Big)$.\\
Let $B$ be the number of integer points in $N_D \cap \NN^n$, then
\begin{enumerate}
\item \label{darbouxsparse} if $D$ has at least $B+1$ Darboux polynomials then $D$ has a Darboux first integral,
\item \label{Jouanolousparse} if $D$ has at least $B+n$ Darboux polynomials then $D$ has a rational first integral. Furthermore, this bound is optimal.
\end{enumerate}

\end{Thm}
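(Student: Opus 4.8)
\emph{Strategy.} The degree $d$ enters Darboux's and Jouanolou's theorems only through one quantity: the dimension of the $\CC$-vector space in which all cofactors live. In the dense case a cofactor has degree at most $d-1$, which gives the bound $\binom{n+d-1}{n}$. The plan is therefore to isolate a single geometric lemma, namely that the Newton polytope of every cofactor is contained in $N_D$, and then to feed the resulting bound $B$ into the classical arguments in place of $\binom{n+d-1}{n}$.

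\emph{The key lemma.} I would first show that if $f$ is a Darboux polynomial with cofactor $g$, then $\mathcal{N}(g)\subseteq N_D$. Starting from $gf=D(f)=\sum_{i=1}^n A_i\,\partial_{X_i}f$, note that every monomial of $\partial_{X_i}f$ is a monomial of $f$ with its $i$-th exponent lowered by one, so $\mathcal{N}(\partial_{X_i}f)\subseteq \mathcal{N}(f)-e_i$; combined with the exact additivity $\mathcal{N}(A_i\,\partial_{X_i}f)=\mathcal{N}(A_i)+\mathcal{N}(\partial_{X_i}f)$ this yields $\mathcal{N}(A_i\,\partial_{X_i}f)\subseteq (\mathcal{N}(A_i)-e_i)+\mathcal{N}(f)$. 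Summing, and using that Minkowski addition of the fixed polytope $\mathcal{N}(f)$ commutes with the convex hull of a union, gives
\[
\mathcal{N}(D(f))\subseteq \Big(\mathrm{conv}\bigcup_{i=1}^n(\mathcal{N}(A_i)-e_i)\Big)+\mathcal{N}(f).
\]
Since $\mathcal{N}(A_i/X_i)=\mathcal{N}(A_i)-e_i$ and, for generic coefficients, no vertex of the union hull cancels, one has $N_D=\mathrm{conv}\bigcup_{i=1}^n(\mathcal{N}(A_i)-e_i)$; this is precisely the reason for introducing the generic $x_i$. Finally $\mathcal{N}(gf)=\mathcal{N}(g)+\mathcal{N}(f)$ by exact additivity of the Newton polytope of a product over an integral domain, so $\mathcal{N}(g)+\mathcal{N}(f)\subseteq N_D+\mathcal{N}(f)$, and cancellativity of Minkowski sums of convex bodies gives $\mathcal{N}(g)\subseteq N_D$. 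As $g$ is a genuine polynomial, $\mathrm{supp}(g)\subseteq N_D\cap\NN^n$, so all cofactors span a subspace of dimension at most $B$.

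\emph{Deducing the two conclusions.} With this dimension bound both parts are the classical arguments with $B$ replacing $\binom{n+d-1}{n}$. For part~\ref{darbouxsparse}: given $B+1$ Darboux polynomials $f_1,\dots,f_{B+1}$ with cofactors $g_1,\dots,g_{B+1}$, a nontrivial relation $\sum_i\lambda_i g_i=0$ must exist, and then $D\big(\prod_i f_i^{\lambda_i}\big)=\big(\sum_i\lambda_i g_i\big)\prod_i f_i^{\lambda_i}=0$ produces a Darboux first integral. For part~\ref{Jouanolousparse}: with $B+n$ Darboux polynomials the space of relations $\sum_i\lambda_i g_i=0$ has dimension at least $n$, and one then runs the direct proof of Jouanolou's theorem of Llibre and Zhang \cite{LZ}. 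I expect this step to be the main obstacle, since it is not pure linear algebra: the relations are a priori only $\CC$-linear, whereas a rational first integral requires integer exponents. The resolution reads each relation as a closed logarithmic $1$-form $\sum_i\lambda_i\,df_i/f_i$ annihilating $D$; because the $\CC$-independent forms $df_i/f_i$ land in the $(n-1)$-dimensional $\CC(X_1,\dots,X_n)$-space of forms killing $D$, any $n$ of them are forced into a $\CC(X_1,\dots,X_n)$-linear dependence, and unwinding this dependence via the closedness of the forms yields a non-constant rational first integral.

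\emph{Optimality.} For the sharpness asserted in part~\ref{Jouanolousparse} I would exhibit an explicit sparse derivation whose Darboux polynomials are completely understood, for instance one designed so that the only irreducible Darboux polynomials are the coordinates $X_i$ together with a controlled number of binomial factors, for which $N_D$ has exactly $B$ lattice points, there are exactly $B+n-1$ Darboux polynomials, and yet no rational first integral exists. This shows that $B+n-1$ does not suffice and hence that the threshold $B+n$ cannot be lowered. Verifying the absence of a rational first integral for the chosen system, by a weight or monodromy obstruction, is the concrete technical point of this last part.
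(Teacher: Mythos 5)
Your key lemma and both integrability conclusions are correct and follow essentially the paper's route. The paper's Proposition \ref{prop:cofsparse} is exactly your lemma, proved there in ``dual'' form: for each normal $\nu$ of a supporting hyperplane of $N_D$ one bounds $\deg_{\nu}(g)\leq \deg_{\nu}\big(\sum_i x_i A_i/X_i\big)$ and then invokes Proposition \ref{prop:inclusion}, whereas you argue ``primally'' with Minkowski sums and then cancel the summand $\mathcal{N}(f)$ by the cancellation law for Minkowski sums of convex bodies. These are equivalent (proving that cancellation via support functions \emph{is} the paper's weighted-degree computation), so the difference is cosmetic. For part (\ref{Jouanolousparse}), both you and the paper observe that the degree bound enters the classical proofs only through the dimension of the cofactor space, and both defer the passage from $n$ linear relations among cofactors to a rational first integral to Llibre--Zhang \cite{LZ}; your sketch of that argument via closed logarithmic $1$-forms annihilating $D$ is coherent and matches the paper's level of detail on this step.

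The genuine gap is the optimality claim, which is part of the statement being proved. Your last paragraph is a plan, not a proof: no derivation is exhibited, the proposed class of examples (``coordinates plus binomial factors'') is not pinned down, and, as you yourself concede, the essential verification that the example admits no rational first integral is left open. The paper supplies precisely this content: take $p\in\CC[X_1]$ of degree $d$ with distinct roots $\alpha=\alpha_1,\ldots,\alpha_d$, choose $\xi_2,\ldots,\xi_n$ so that $p'(\alpha),\xi_2,\ldots,\xi_n$ are $\ZZ$-independent, and set
$$D=p(X_1)\partial_{X_1}+\xi_2X_2\partial_{X_2}+\cdots+\xi_nX_n\partial_{X_n}.$$
Here $\sum_i x_iA_i/X_i=x_1p(X_1)/X_1+\sum_{i\geq2}x_i\xi_i$, so $N_D\cap\NN^n$ consists of the $d$ points $(j,0,\ldots,0)$ with $0\leq j\leq d-1$, i.e.\ $B=d$; the polynomials $X_1-\alpha_1,\ldots,X_1-\alpha_d,X_2,\ldots,X_n$ are $B+n-1$ Darboux polynomials; and the absence of a nontrivial rational first integral follows from the eigenvalue criterion (Corollary 5.3 in \cite{Goriely}) applied at the fixed point $(\alpha,0,\ldots,0)$, whose eigenvalues $p'(\alpha),\xi_2,\ldots,\xi_n$ are $\ZZ$-independent. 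Without an explicit example together with such a non-integrability certificate, the ``furthermore'' clause of part (\ref{Jouanolousparse}) remains unproved in your proposal.
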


We can remark that if we consider dense polynomials $A_i$ with degree $d$, that is to say each coefficient of $A_i$ is nonzero, then $B=\binom{n+d-1}{n}$. Thus Theorem \ref{Thm} gives the classical bounds in the dense case.\\

Now, we illustrate  why these bounds can be better than the classical ones.  We give an example with $n=2$ in order to give a picture. If $A_i$ has the following form: $A_i(X_1,X_2)=c_{e,e}X_1^eX_2^e+c_{e-1,e}X_1^{e-1}X_2^e+c_{e,e-1}X_1^{e}X_2^{e-1} +c_{0,0}$, then $B=3e+2$, and $d=\deg(A_i)=2e$. In this situation we have $\binom{n+d-1}{2}=2e(2e+1)/2$. Thus for such examples Theorem \ref{Thm} gives a linear bound instead of a quadratic bound.\\

Figure \ref{fig1} shows the Newton polygon of $A_i(X_1,X_2)$, when $e=3$. The triangle corresponds to the Newton polygon of dense polynomials with total degree $6$. In this situation, Jouanolou's theorem says that if we have $23$ Darboux polynomials then we have a rational first integral. Here, our bound improves this result and means that $13$ Darboux polynomials are sufficient to construct a rational first integral.

\begin{center}
\begin{figure}[!h]
\setlength{\unitlength}{.25cm}
\begin{picture}(10,10)
\put(0.5,1){$0$}
\put(1,2){\vector(1,0){7}}
\put(1,1.5){\vector(0,1){7}}
\put(-1,8){$X_2$}
\put(8.5,1.5){$X_1$}
\put(1,2){\circle*{.2}}
\put(2,3){\circle*{.2}}
\put(3,4){\circle*{.2}}
\put(4,4){\circle*{.2}}
\put(3,5){\circle*{.2}}
\put(3,2){\circle*{.1}}
\put(2,2){\circle*{.1}}
\put(4,2){\circle*{.1}}
\put(1,3){\circle*{.1}}
\put(1,4){\circle*{.1}}
\put(1,5){\circle*{.1}}
\put(1,6){\circle*{.1}}
\put(1,7){\circle*{.1}}
\put(2,4){\circle*{.1}}
\put(2,5){\circle*{.1}}
\put(2,6){\circle*{.1}}
\put(3,2){\circle*{.1}}
\put(3,3){\circle*{.1}}
\put(4,3){\circle*{.1}}
\put(5,3){\circle*{.1}}
\put(5,2){\circle*{.1}}
\put(6,2){\circle*{.1}}
\put(7,2){\circle*{.1}}
\put(6,3){\circle*{.1}}
\put(5,4){\circle*{.1}}
\put(4,5){\circle*{.2}}
\put(3,6){\circle*{.1}}
\put(2,7){\circle*{.1}}
\put(1,8){\circle*{.1}}
\thicklines\put(1,2){\line(3,2){3}}
\thicklines\put(1,2){\line(2,3){2}}
\thicklines\put(3,5){\line(1,0){1}}
\thicklines\put(4,5){\line(0,-1){1}}
\end{picture}
\caption{Newton polygon  $\mathcal{N}\big(A_i(X_1,X_2)\big)$.}
\label{fig1}
\end{figure}
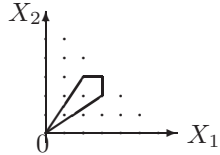
\end{center}

The Newton polygon of $\dfrac{A_i(X_1,X_2)}{X_1}$ corresponds to a translation of the Newton polygon of  $A_i(X_1,X_2)$. In Figure \ref{fig2} we show $\mathcal{N}\Big(\dfrac{A_i(X_1,X_2)}{X_1}\Big)$.

\begin{center}
\begin{figure}[!h]
\setlength{\unitlength}{.25cm}
\begin{picture}(10,10)
\put(0.5,1){$0$}
\put(0,2){\vector(1,0){8}}
\put(1,1.5){\vector(0,1){7}}
\put(-1,8){$X_2$}
\put(8.5,1.5){$X_1$}
\put(1,2){\circle*{.1}}
\put(1,3){\circle*{.2}}
\put(2,4){\circle*{.2}}
\put(4,4){\circle*{.1}}
\put(3,5){\circle*{.2}}
\put(3,2){\circle*{.1}}
\put(2,2){\circle*{.1}}
\put(4,2){\circle*{.1}}
\put(1,3){\circle*{.1}}
\put(1,4){\circle*{.1}}
\put(1,5){\circle*{.1}}
\put(1,6){\circle*{.1}}
\put(1,7){\circle*{.1}}
\put(2,4){\circle*{.1}}
\put(2,5){\circle*{.2}}
\put(2,6){\circle*{.1}}
\put(3,2){\circle*{.1}}
\put(3,4){\circle*{.2}}
\put(4,3){\circle*{.1}}
\put(5,3){\circle*{.1}}
\put(5,2){\circle*{.1}}
\put(6,2){\circle*{.1}}
\put(7,2){\circle*{.1}}
\put(6,3){\circle*{.1}}
\put(5,4){\circle*{.1}}
\put(4,5){\circle*{.1}}
\put(3,6){\circle*{.1}}
\put(2,3){\circle*{.1}}
\put(3,3){\circle*{.1}}
\put(2,7){\circle*{.1}}
\put(1,8){\circle*{.1}}
\put(0,2){\circle*{.2}}
\put(2,5){\line(1,0){1}}
\put(3,5){\line(0,-1){1}}
\thicklines\put(0,2){\line(2,3){2}}
\thicklines\put(0,2){\line(3,2){3}}
\thicklines\put(2,5){\line(1,0){1}}
\thicklines\put(3,5){\line(0,-1){1}}
\end{picture}
\caption{Newton polygon  $\mathcal{N}\Big(\dfrac{A_i(X_1,X_2)}{X_1}\Big)$.}
\label{fig2}
\end{figure}
\end{center}
Figure \ref{fig3} shows the part of the Newton polygon of  $x_1\dfrac{A_1(X_1,X_2)}{X_1}+x_2\dfrac{A_2(X_1,X_2)}{X_2}$ in $\NN^2$, when $x_1$, $x_2$ are generic.
\begin{center}
\begin{figure}[!h]
\setlength{\unitlength}{.4cm}
\begin{picture}(10,10)
\put(0.5,1){$0$}
\put(1,2){\vector(1,0){7}}
\put(1,1.5){\vector(0,1){7}}
\put(-0.5,8){$X_2$}
\put(8.5,1.5){$X_1$}
\put(1,2){\circle*{.2}}
\put(2,3){\circle*{.2}}
\put(3,4){\circle*{.2}}
\put(4,4){\circle*{.2}}
\put(3,5){\circle*{.2}}
\put(3,2){\circle*{.1}}
\put(2,2){\circle*{.2}}
\put(4,2){\circle*{.1}}
\put(1,3){\circle*{.2}}
\put(1,4){\circle*{.1}}
\put(1,5){\circle*{.1}}
\put(1,6){\circle*{.1}}
\put(1,7){\circle*{.1}}
\put(2,4){\circle*{.2}}
\put(2,5){\circle*{.2}}
\put(2,6){\circle*{.1}}
\put(3,2){\circle*{.1}}
\put(3,3){\circle*{.2}}
\put(4,3){\circle*{.2}}
\put(5,3){\circle*{.1}}
\put(5,2){\circle*{.1}}
\put(6,2){\circle*{.1}}
\put(7,2){\circle*{.1}}
\put(6,3){\circle*{.1}}
\put(5,4){\circle*{.1}}
\put(4,5){\circle*{.1}}
\put(3,6){\circle*{.1}}
\put(2,7){\circle*{.1}}
\put(1,8){\circle*{.1}}
\thicklines\put(3,5){\line(1,-1){1}}
\thicklines\put(2,2){\line(2,1){2}}
\thicklines\put(1,3){\line(1,2){1}}
\thicklines\put(2,5){\line(1,0){1}}
\thicklines\put(4,4){\line(0,-1){1}}
\thicklines\put(1,2){\line(1,0){1}}
\thicklines\put(1,2){\line(0,1){1}}

\end{picture}
\caption{Newton polygon $\mathcal{N}\Big(x_1\dfrac{A_1(X_1,X_2)}{X_1}+x_2\dfrac{A_2(X_1,X_2)}{X_2}\Big)\cap \NN^2$.}
\label{fig3}
\end{figure}
\end{center}

\subsection*{Structure of the paper} In Section \ref{section:1} we give some results about Newton  polytopes and weighted degree. In Section \ref{section:proof} we prove Theorem \ref{Thm} and we show with an example that the bound is optimal.

 \section{ToolBox}\label{section:1}
In this section we introduce some notations and results that will be useful in Section \ref{section:proof}. These kinds of tools are already present in the work of Ostrowski, see \cite{Ostro}. For more results about  sparse polynomials, see e.g. \cite{GKZ,CLO}.

\begin{Def}
Let $P$ be a polytope, then $H$ is a supporting hyperplane of $P$ if
\begin{enumerate}
\item $H \cap P \neq \emptyset$,
\item $P$ is fully contained in one of the two halfspaces defined by $H$.
\end{enumerate}
\end{Def}

In our situation, as we consider Newton polytopes, the equation of $H$ has integer coefficients.  More precisely, the equation of $H$ is $\nu. m=a$, where $.$ denotes the usual scalar product,  $\nu$ is a vector with integer coefficients, and $a$ is an integer.\\

We can represent a Newton polytope with the equations of its supporting hyperplanes:\\
$$\mathcal{N}(f)=\{ m \in \ZZ^n \mid \nu_j.m\leq a_j, \textrm{ for } j=1,\ldots,k\},$$
where $k$ is the number of supporting hyperplanes.

Now we define  a degree related to a vector $\nu \in \ZZ^n$.
\begin{Def}
Let $\nu \in \ZZ^n$, we set $\deg_{\nu}(f)=\max_{m \in \mathcal{N}(f)} \nu.m$.
\end{Def}

Now, we explain why we can call $\deg_{\nu}(f)$ a degree.

\begin{Prop}\label{prop:degree}
Let $f$ and $g$ be two polynomials in $\CC[X_1,\ldots,X_n]$, $\nu=(\nu_1,\ldots,\nu_n)$ in $\ZZ^n$ and $x_1,x_2$ two generic elements in $\CC^2$.
\begin{enumerate}
\item $\deg_{\nu}(f+g)\leq \max\big(\deg_{\nu}(f),\deg_{\nu}(g)\big)$,
\item $\deg_{\nu}(x_1f+x_2g)= \max\big(\deg_{\nu}(f),\deg_{\nu}(g)\big)$,
\item $\deg_{\nu}(f.g)=\deg_{\nu}(f)+\deg_{\nu}(g)$,
\item $\deg_{\nu}(\partial_{X_i}f) \leq \deg_{\nu}(f)-\nu_i=\deg_{\nu}(f/X_i)$.
\end{enumerate}
\end{Prop}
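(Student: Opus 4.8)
The plan is to reduce everything to the observation that, because $m\mapsto\nu\cdot m$ is a linear functional, its maximum over the convex hull $\mathcal{N}(f)$ is attained at a vertex, so that for $f=\sum_{\alpha}c_{\alpha}X^{\alpha}$ one has
$$\deg_{\nu}(f)=\max\{\nu\cdot\alpha \mid c_{\alpha}\neq 0\}.$$
In other words $\deg_{\nu}$ is the weighted degree attached to the weight vector $\nu$. The central device throughout is the associated \emph{leading form}
$$\mathrm{in}_{\nu}(f)=\sum_{\nu\cdot\alpha=\deg_{\nu}(f)}c_{\alpha}X^{\alpha},$$
which is nonzero whenever $f\neq 0$. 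With this in hand, part (1) is immediate: the exponents occurring in $f+g$ form a subset of the union of the exponent sets of $f$ and $g$ (terms can only cancel), so the maximum defining $\deg_{\nu}(f+g)$ runs over a smaller set, giving the inequality.

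For (2), if $\deg_{\nu}(f)\neq\deg_{\nu}(g)$ the leading form of the larger one cannot be cancelled for any $x_1,x_2$ with the relevant coefficient nonzero, and equality is immediate. If $\deg_{\nu}(f)=\deg_{\nu}(g)=:D$, the degree-$D$ part of $x_1f+x_2g$ equals $x_1\,\mathrm{in}_{\nu}(f)+x_2\,\mathrm{in}_{\nu}(g)$; since $\mathrm{in}_{\nu}(f)$ and $\mathrm{in}_{\nu}(g)$ are fixed nonzero polynomials, this linear combination vanishes only on a proper linear subspace of the $(x_1,x_2)$-plane, which a generic choice avoids. Hence the leading form survives and $\deg_{\nu}(x_1f+x_2g)=D$.

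Part (3) is the conceptual heart and the only step that uses more than bookkeeping. Expanding $f\cdot g$, every cross term $X^{\alpha+\beta}$ satisfies $\nu\cdot(\alpha+\beta)\leq\deg_{\nu}(f)+\deg_{\nu}(g)$, with equality exactly when $\alpha$ and $\beta$ are leading exponents of $f$ and $g$ respectively. Thus the part of $fg$ of $\nu$-degree $\deg_{\nu}(f)+\deg_{\nu}(g)$ is precisely $\mathrm{in}_{\nu}(f)\cdot\mathrm{in}_{\nu}(g)$. Because $\CC[X_1,\ldots,X_n]$ is an integral domain and both factors are nonzero, this product is nonzero, which forces $\deg_{\nu}(fg)=\deg_{\nu}(f)+\deg_{\nu}(g)$. \textbf{This use of the domain property to rule out cancellation of the product of leading forms is the key point}, and it is exactly where care is needed: over a coefficient ring with zero divisors the leading forms could multiply to zero and the equality would fail.

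Finally, for (4), differentiation sends $X^{\alpha}$ to $\alpha_iX^{\alpha-e_i}$ (with $e_i$ the $i$th standard basis vector), killing the monomials with $\alpha_i=0$ and shifting each surviving exponent by $-e_i$, hence its $\nu$-degree by $-\nu_i$; taking the maximum over a possibly smaller set yields $\deg_{\nu}(\partial_{X_i}f)\leq\deg_{\nu}(f)-\nu_i$. For $f/X_i$ every exponent $\alpha$ is shifted to $\alpha-e_i$ with no cancellation and no loss of terms, so $\deg_{\nu}(f/X_i)=\max_{\alpha}\nu\cdot(\alpha-e_i)=\deg_{\nu}(f)-\nu_i$, which gives the stated equality and completes the proof.
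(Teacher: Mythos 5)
Your proof is correct, and it takes a genuinely different --- and more self-contained --- route than the paper. The paper works throughout in the language of Newton polytopes: part (1) follows from the inclusion of $\mathcal{N}(f+g)$ in the convex hull of $\mathcal{N}(f)\cup\mathcal{N}(g)$ together with the fact that a linear functional on a convex set attains its maximum at a point of $\mathcal{N}(f)\cup\mathcal{N}(g)$; part (2) from the claim that for generic $x_1,x_2$ the polytope $\mathcal{N}(x_1f+x_2g)$ equals that convex hull; and, crucially, part (3) is not proved at all but deduced from Ostrowski's theorem $\mathcal{N}(f.g)=\mathcal{N}(f)+\mathcal{N}(g)$ (Minkowski sum), cited from the literature. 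You instead reduce $\deg_{\nu}$ to a weighted degree over the exponents actually present and argue with leading forms: your part (3) replaces the citation with a direct argument --- the top $\nu$-graded piece of $fg$ equals $\mathrm{in}_{\nu}(f)\,\mathrm{in}_{\nu}(g)$, which is nonzero because $\CC[X_1,\ldots,X_n]$ is an integral domain --- and this is exactly the elementary core of Ostrowski's result, so your proposition stands on its own. Your part (2) is also slightly sharper in spirit: where the paper asserts that generic coefficients avoid ``simplifications'' so that the full convex hull is realized, you only need (and prove) that the leading form survives, identifying explicitly the proper linear subspace of the $(x_1,x_2)$-plane that a generic choice must avoid. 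What the paper's formulation buys is brevity and consistency with the polytope viewpoint that it reuses in Proposition \ref{prop:inclusion} and in the proof of Proposition \ref{prop:cofsparse}; what yours buys is a complete proof from first principles, with the domain property isolated as the one genuinely algebraic ingredient.
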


\begin{proof}
\begin{enumerate}
\item As $\mathcal{N}(f+g)$ is included in the convex hull of  $\mathcal{N}(f) \cup \mathcal{N}(g)$, we have 
$$\deg_{\nu}(f+g)=\max_{m \in \mathcal{N}(f+g)} \nu.m\leq \max_{m \in Conv( \mathcal{N}(f) \cup \mathcal{N}(g) )}\nu.m,$$
where $Conv(.)$ denotes the convex hull.\\
Furthemore, as we consider a convex set, we deduce that this maximum is reached on a point in $\mathcal{N}(f) \cup \mathcal{N}(g)$. Thus
\begin{eqnarray*}
\deg_{\nu}(f+g)&\leq &\max_{m \in \mathcal{N}(f) \cup \mathcal{N}(g) }\nu.m\\
&\leq&\max\big(   \max_{m \in \mathcal{N}(f)} \nu.m, \max_{m \in \mathcal{N}(g)} \nu.m \big)\\
&\leq&    \max\big(\deg_{\nu}(f),\deg_{\nu}(g)\big).
\end{eqnarray*}
\item As $x_1, x_2$ are generic then $\mathcal{N}(x_1f+x_2g)$ is equal to the convex hull of  $\mathcal{N}(f) \cup \mathcal{N}(g)$. Indeed, with generic $x_1$ and $x_2$ we avoid simplifications in the sum $x_1f+x_2g$. Then the proof in this case proceeds as before.
\item This result comes from the well-known result by Ostrowski, \cite{Ostro}, which gives: $\mathcal{N}(f.g)=\mathcal{N}(f)+\mathcal{N}(g)$, where $+$  in this situation is the Minkowski sum.
\item Let $e_i$ be the i-th vector of the canonical basis of $\RR^n$, then we have 
$$\max_{m \in \mathcal{N}(\partial_{X_i} f)} \nu.m \leq \max_{m \in \mathcal{N}(f)} \nu.(m-e_i)=\max_{m \in \mathcal{N}(f)} \nu.m -\nu_i =\deg_{\nu}(f)-\nu_i.$$
 We also have 
$$\max_{m \in \mathcal{N}(f)} \nu.(m-e_i)=\max_{m \in \mathcal{N}(f/X_i)} \nu.m$$
this completes the proof.
\end{enumerate}
\end{proof}

Newton polytopes and degree $\deg_{\nu}$ are related by the following proposition.

\begin{Prop}\label{prop:inclusion}
Let $f \in  \CC[X_1^{\pm 1},\ldots,X_n^{\pm 1}]$ be a Laurent polynomial with corresponding Newton polytope:
$$\mathcal{N}(f)=\{ m \in \ZZ^n \mid \nu_j.m\leq a_j, \textrm{ for } j=1,\ldots,k\},$$
where $\nu_j.m=a_j$, whith  $a_j \in\ZZ$, and $\nu_j \in \ZZ^n$ are the equations of the $k$  supporting hyperplanes of $\mathcal{N}(f)$.\\
Let $g \in \CC[X_1^{\pm 1},\ldots,X_n^{\pm 1}]$ such that for $j=1,\ldots,k$, $\deg_{\nu_j}(g)\leq \deg_{\nu_j}(f)$ then $\mathcal{N}(g)$ is included in $\mathcal{N}(f)$.
\end{Prop}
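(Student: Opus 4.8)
The plan is to exploit the half-space (H-)representation of $\mathcal{N}(f)$ supplied in the hypothesis, and to verify that every point of $\mathcal{N}(g)$ satisfies each of its defining inequalities. The whole argument hinges on identifying the constants $a_j$ appearing in that representation with the degrees $\deg_{\nu_j}(f)$.

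First I would record that each supporting hyperplane certifies the value of the associated degree. Since $\nu_j.m=a_j$ is a supporting hyperplane of $\mathcal{N}(f)$, the polytope $\mathcal{N}(f)$ is contained in the halfspace $\nu_j.m\leq a_j$, while at the same time the hyperplane meets $\mathcal{N}(f)$; hence the maximum of $\nu_j.m$ over $\mathcal{N}(f)$ equals $a_j$, that is $\deg_{\nu_j}(f)=\max_{m\in\mathcal{N}(f)}\nu_j.m=a_j$ for each $j=1,\ldots,k$.

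Next I would fix an arbitrary $m\in\mathcal{N}(g)$. By the very definition of $\deg_{\nu_j}$ we have $\nu_j.m\leq \deg_{\nu_j}(g)$ for each $j$, and the hypothesis gives $\deg_{\nu_j}(g)\leq \deg_{\nu_j}(f)=a_j$. Combining these, $\nu_j.m\leq a_j$ holds for every $j=1,\ldots,k$ simultaneously, for this one point $m$.

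Finally, since the $k$ inequalities $\nu_j.m\leq a_j$ constitute exactly the H-representation of $\mathcal{N}(f)$ given in the statement, the point $m$ lies in $\mathcal{N}(f)$; as $m\in\mathcal{N}(g)$ was arbitrary, this yields $\mathcal{N}(g)\subseteq \mathcal{N}(f)$. I do not expect a genuine obstacle here: the only subtlety worth flagging is that a single scalar inequality $\deg_{\nu_j}(g)\leq a_j$ controls all of $\mathcal{N}(g)$ at once, precisely because $\deg_{\nu_j}(g)$ is the maximum of $\nu_j.m$ over the whole polytope. The substance of the proof is entirely in recognizing that the hypothesis already encodes the H-description of $\mathcal{N}(f)$ and that $a_j=\deg_{\nu_j}(f)$.
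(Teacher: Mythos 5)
Your proposal is correct and follows essentially the same argument as the paper: identify $a_j=\deg_{\nu_j}(f)$ via the supporting hyperplane property, bound $\nu_j.m\leq\deg_{\nu_j}(g)\leq\deg_{\nu_j}(f)=a_j$ for every $m\in\mathcal{N}(g)$, and conclude from the H-representation of $\mathcal{N}(f)$. The paper compresses all of this into a single chain of equalities and inequalities, but the content is identical; your version merely makes the justification of $\deg_{\nu_j}(f)=a_j$ explicit.
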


\begin{proof}
We just have to remark that
$$\max_{n \in \mathcal{N}(g)}  \nu_j.n=\deg_{\nu_j}(g) \leq \deg_{\nu_j}(f) =\max_{m \in \mathcal{N}(f)} \nu_j.m = a_j.$$
Thus each element in $\mathcal{N}(g)$ satisfies the equations of $\mathcal{N}(f)$.
\end{proof}
\section{Proof of Theorem \ref{Thm}}\label{section:proof}
\subsection{Newton polytope and cofactors}
We are going to show that if $A_i$ are sparse then the cofactors are sparse. This property will be the main tool of the proof of Theorem \ref{Thm}.

\begin{Prop}\label{prop:cofsparse}
Let $D=\sum_{i=1}^nA_i(X_1,\ldots,X_n)\partial_{X_i}$ be a derivation. Let $f$ be a Darboux polynomial with corresponding cofactor $g$.\\
Consider generic values $(x_1,\ldots,x_n)$ in $\CC^n$ and let $N_D$ be the convex set $\mathcal{N}\Big(\sum_{i=1}^n x_i \dfrac{A_i}{X_i}\Big)$
 then
$$\mathcal{N}(g) \subset  N_D \cap \NN^n.$$
\end{Prop}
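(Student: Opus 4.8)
The plan is to unravel the definition of a Darboux polynomial and apply the degree inequalities from Proposition~\ref{prop:degree} directly. Recall that $f$ being a Darboux polynomial means $D(f)=\sum_{i=1}^n A_i \partial_{X_i} f = g\cdot f$. The cofactor $g$ is a polynomial, so $\mathcal{N}(g)\subset\NN^n$ automatically, and the content of the proposition is really the inclusion $\mathcal{N}(g)\subset N_D$. By Proposition~\ref{prop:inclusion}, to prove this inclusion it suffices to show that for each supporting hyperplane of $N_D$ with integer normal $\nu_j$, we have $\deg_{\nu_j}(g)\le \deg_{\nu_j}(P)$, where I write $P=\sum_{i=1}^n x_i \frac{A_i}{X_i}$ for the defining polynomial of $N_D$. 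So the whole proof reduces to a single weighted-degree estimate, which I would prove for an arbitrary $\nu\in\ZZ^n$.

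The key computation is to bound $\deg_\nu(g)$ from above. The natural idea is to relate $g$ to $D(f)/f$ and to compare weighted degrees across the identity $gf=D(f)$. First I would use multiplicativity (Proposition~\ref{prop:degree}(3)) to get $\deg_\nu(g)+\deg_\nu(f)=\deg_\nu(gf)=\deg_\nu(D(f))$. Now $D(f)=\sum_i A_i\partial_{X_i}f$, and I want to bound this. Using subadditivity (part (1)) and then multiplicativity together with the derivative estimate (part (4)), each summand satisfies
\begin{align*}
\deg_\nu(A_i\,\partial_{X_i}f)
&=\deg_\nu(A_i)+\deg_\nu(\partial_{X_i}f)\\
&\le \deg_\nu(A_i)-\nu_i+\deg_\nu(f)\\
&=\deg_\nu(A_i/X_i)+\deg_\nu(f).
\end{align*}
Taking the maximum over $i$ and invoking part~(1) on the sum gives
\[
\deg_\nu\big(D(f)\big)\le \max_i\big(\deg_\nu(A_i/X_i)\big)+\deg_\nu(f).
\]
Subtracting $\deg_\nu(f)$ from both sides of the multiplicativity identity then yields $\deg_\nu(g)\le \max_i \deg_\nu(A_i/X_i)$.

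Finally I must identify the right-hand side with $\deg_\nu(P)$. Since the $x_i$ are generic, part~(2) of Proposition~\ref{prop:degree} (extended to an $n$-fold sum) gives exactly $\deg_\nu(P)=\deg_\nu\big(\sum_i x_i A_i/X_i\big)=\max_i \deg_\nu(A_i/X_i)$, because genericity prevents any cancellation of the leading terms in the chosen direction $\nu$. Thus $\deg_\nu(g)\le \deg_\nu(P)$ for every $\nu$, and applying this at each supporting normal $\nu_j$ of $N_D$ lets Proposition~\ref{prop:inclusion} conclude $\mathcal{N}(g)\subset N_D$. The step I expect to need the most care is the genericity argument in part~(2): I must be sure that for every relevant direction $\nu_j$ the generic choice of $(x_1,\dots,x_n)$ simultaneously avoids cancellation of the $\nu_j$-leading terms among all the $A_i/X_i$. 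Since there are finitely many supporting hyperplanes and finitely many vanishing conditions, each cutting out a proper Zariski-closed subset of the $x$-space, their union is still proper, so a generic choice works for all $\nu_j$ at once; this is what justifies using a single $N_D$ valid for all cofactors.
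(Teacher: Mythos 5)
Your proposal is correct and follows essentially the same route as the paper: multiplicativity applied to $gf = \sum_i A_i\partial_{X_i}f$, the summand bound $\deg_\nu(A_i\partial_{X_i}f)\le \deg_\nu(A_i/X_i)+\deg_\nu(f)$ via parts (3) and (4) of Proposition~\ref{prop:degree}, subadditivity for the sum, and then Proposition~\ref{prop:inclusion} applied to the supporting-hyperplane normals of $N_D$. Your closing remark that a single generic choice of $(x_1,\ldots,x_n)$ handles all finitely many supporting directions at once is slightly more careful than the paper, which leaves that point implicit, but it is the same argument.
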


\begin{proof}
First, obviously $\mathcal{N}(g) \in \NN^n$ since $g$ is a polynomial.\\
Second, we have $g.f=\sum_{i=1}^n A_i \partial_{X_i} f$ , thus for all $\nu \in \ZZ^n$ we have $\deg_{\nu}(g.f)=\deg_{\nu}(\sum_{i=1}^n A_i \partial_{X_i}f)$.  Thanks to Proposition \ref{prop:degree}, we deduce these inequalities
\begin{eqnarray*}
\deg_{\nu}(g)+\deg_{\nu}(f) & \leq & \max_i(\deg_{\nu}(A_i) \partial_{X_i}f)\\
& \leq & \max_i(\deg_{\nu}(A_i) +\deg_{\nu}\partial_{X_i}f)\\
& \leq & \max_i(\deg_{\nu}(A_i) +\deg_{\nu}(f) -\nu_i).
\end{eqnarray*}
Thus we have
\begin{eqnarray*}
\deg_{\nu}(g)&\leq& \max_i(\deg_{\nu} (A_i)  -\nu_i)\\
\deg_{\nu}(g)&\leq& \max_i\Big(\deg_{\nu} (\dfrac{A_i}{X_i})\Big)\\
\deg_{\nu}(g)&\leq& \deg_{\nu} \Big(\sum_{i=1}^n x_i.\dfrac{A_i}{X_i}\Big).\\
\end{eqnarray*}
Now, we apply Proposition \ref{prop:inclusion} with  $\nu\in \ZZ^n$ corresponding to supporting hyperplanes of $\mathcal{N}\Big(\sum_{i=1}^n x_i.\dfrac{A_i}{X_i}\Big)$ and we get the desired result.
\end{proof}

We can now prove easily Theorem \ref{Thm}. Indeed, we use the classical strategy to prove Darboux Theorem in our situation.\\
As for all cofactors $g_{f_i}$ associated to a Darboux polynomial $f_i$, we have, by Proposition \ref{prop:cofsparse}, 
$$\mathcal{N}(g_{f_i}) \subset  N_D \cap \NN^n,$$
then all  cofactors belong to a $\CC$-vector space of dimension  $B$, where  $B$ is  the number of integer points in $N_D \cap \NN^n$. Thus if we have $B+1$ cofactors, then there exists a relation between them: $$(\star)\,\, \sum_{i \in I} \lambda_i g_{f_i}=0,$$ 
where $\lambda_i$ are complex numbers.
Now, we recall a fundamental and  straightforward result on Darboux polynomials: $g_{f_1.f_2}=g_{f_1}+g_{f_2}$.  Thus relation $(\star)$ gives the Darboux first integral $\prod_{i \in I}f_i^{\lambda_i}$. This proves the first part of Theorem \ref{Thm}.\\

Now, in order to prove the second part of our theorem, we can use the strategy proposed in \cite{LZ}. In \cite{LZ} the authors show that if we have $n$ relations of the type $(\star)$ then we can deduce a relation with integer coefficients, i.e. $\lambda_i \in \ZZ$. This gives a first integral of this kind: $\prod_{i \in I}f_i^{\lambda_i}$ with $\lambda_i \in \ZZ$, thus this first integral belongs to $\CC(X_1,\ldots,X_n)$ and we have a rational first integral.\\
 In the sparse case as the cofactors belong to a $\CC$-vector space of dimension $B$, if we have $B+n$ Darboux polynomials then we have $B+n$ cofactors and then we deduce $n$ relations between the cofactors. With the strategy used in \cite{LZ} we obtain that the derivation has a rational first integral.
\subsection{The bound is optimal}
Consider a polynomial $p(X_1) \in \CC[X_1]$ with degree $d$. Let $\alpha$ be a root of $p$ and $\xi_2,\ldots,\xi_n$ be distinct complex numbers such that $p'(\alpha),\xi_2,\ldots,\xi_n$ are $\ZZ$-independent. We denote by $D$ the following derivation:
$$D=p(X_1)\partial_{X_1}+\xi_2X_2\partial_{X_2}+\cdots+\xi_nX_n\partial_{X_n}.$$

This derivation has no non-trivial rational first integrals, by Corollary 5.3 in \cite{Goriely}. Indeed, $(\alpha,0,\dots,0)$ is a fixed point of the polynomial vector field, and the corresponding eigenvalues are distinct and $\ZZ$ independent.\\

By Proposition \ref{prop:cofsparse}, if $g$ is a cofactor then $\mathcal{N}(g) \subset  N_D \cap \NN^n$. Here, $N_D\cap \NN^n$ is the set of univariate polynomials in $X_1$ with degree smaller than $d-1$. Thus $B=d$. \\

Furthermore, $(X_1-\alpha)$, $(X_1-\alpha_2),\ldots,(X_1-\alpha_d)$ where $\alpha_i$ are roots of $p$, and  $X_2,\ldots,X_n$ are Darboux polynomials. Thus we have $d+n-1$ Darboux polynomials.\\

In conclusion, we cannot improve the bound given in Theorem \ref{Thm}, since there exists a derivation without rational first integrals  which has  $B+n-1$ Darboux polynomials.



\end{document}